\newcommand{\R}{\mathbb{R}} 
\newcommand{\ind}{\mathbf{1}}
\newcommand{\PW}{\mathfrak{p}}
\newcommand{\CF}{\mathcal F}
\newcommand{\Fcal}{{\mathcal F}}
\newcommand{\Hcal}{{\mathcal H}}
\newcommand{\Mcal}{{\mathcal M}}
\newtheorem{theorem}{Theorem}[section]
\newtheorem{proposition}[theorem]{Proposition}
\newtheorem{lemma}[theorem]{Lemma}
\newtheorem{remark}[theorem]{Remark}
\newtheorem{example}[theorem]{Example}
\newtheorem{definition}[theorem]{Definition}
\begin{document}

\title{The birth of (a robust) Arbitrage Theory in de Finetti's early contributions}
\date{\today}
\author{Marco Maggis\thanks{
Dipartimento di Matematica, Universit\`a degli Studi di Milano, Via Saldini
50, 20133 Milano, Italy, \emph{marco.maggis@unimi.it}. 
\\ The author would like to thank Teddy Seidenfeld for pointing their contribution  \cite{SSK08}, during a de Finetti Risk seminar held in Milan in April 2015. Moreover the author is grateful to Matteo Burzoni, Marco Frittelli and the students of the classes of Mathematical Economics for stimulating discussions on this topic.}}

\maketitle

\begin{abstract} \noindent \textit{Il significato soggettivo della probabilit\`a} (1931) by B. de Finetti \cite{deF} is unanimously considered the rise of `subjectivism', a notion which strongly influenced both Probability and Decision Theory. 
What is less acknowledge is that \cite{deF} posed the foundations of modern arbitrage theory. In this paper we aim at examining how de Finetti's contribution should be considered as the precursor of Asset Pricing Theory and we show how his findings relate to recent developments in Robust Finance.    
\end{abstract}

\noindent \textbf{Keywords}: subjective probability, coherent prevision, arbitrage opportunity, linear pricing rule 

\parindent=0em \noindent

\section{Introduction}
In the economical and financial literature the \emph{Asset Pricing Model} postulates that
prices of financial assets composing the market are known at a certain initial time, while prices at future times are modelled adopting random outcomes (usually stochastic processes). The key reason for producing such models is to assign
rational prices to contracts which are not liquid enough to have a
market--determined price. The economical rationale beyond this approach is the
principle of absence of arbitrage opportunities, stating that it should not be possible to make a riskless profit by exploiting price discrepancies between different markets or financial instruments. 
If an arbitrage opportunity were to arise, rational investors would quickly exploit it, bringing prices back to equilibrium. Starting from these premises, the theory of pricing
by no arbitrage has been successfully developed (both for discrete and continuous time models) leading to the formulation of the so-called Fundamental Theorems of Asset Pricing
(FTAP), which establish equivalence between absence of arbitrage and existence
of risk neutral pricing rules (e.g. \cite{BlackScholes:73, DMW90, DS:FTAP, HK79, Ross76}). 

\medskip

W. Schachermayer \cite{Sch2010} already noticed that such fundamental questions date back much further in the literature and in particular to the contributions of \cite{Ke55,Sh55}. Nevertheless both these papers only provide logical formalization of concepts that were previously introduced and developed by B. de Finetti in \cite{definetti}. The relation between de Finetti \emph{subjective probability} and the notion of \emph{fair evaluation} of bets is partially unknown to the wide audience whose research activity concerns mathematical modelling for  Finance. This is mostly due to the fact that \cite{definetti} is written in Italian and composed as a discursive argumentation rather than adopting a mathematical formalization.  
\\ In de Finetti's perspective the estimation of the probability of events which influence everyday
life and decisions is a very hard task, which is subject to the personal expertise and affected by mistakes. This motivates the famous assertion which can be found in the preface of the
book Theory of Probability \cite{deF}, which summarizes the
trajectory of the research agenda he followed since the very beginning of his career. 

\medskip

\textquoteleft \textquoteleft My thesis, paradoxically, and
a little provocatively, but nonetheless genuinely, is simply this: PROBABILITY DOES NOT EXISTS.
The abandonment of superstitious beliefs about the existence
of Phlogiston, the Cosmic Ether, Absolute Space and Time, ... , or
Fairies and Witches, was an essential step along the road to
scientific thinking. \\Probability, too, if regarded as something
endowed with some kind of objective existence, is no less a
misleading misconception, an illusory attempt to exteriorize or
materialize our true probabilistic beliefs.
\textquoteright\textquoteright

\medskip

Nevertheless there exists a common agreement that the axioms characterizing (finitely additive) probability measures are the minimal necessary requirement to construct a reasonable mathematical theory regarding decisions under uncertainty. This evidence can find an unexpected motivation if we regard axioms of probability in relation to fairness of bets between two agents. Indeed the starting point to formulate the principle of subjective probability in \cite{deF} can be depicted by the following quotation\footnote{This is a free translation of the original Italian excerpt \cite[Page 154]{definetti}: \emph{\textquoteleft\textquoteleft Pu\`o sembrare infatti che nell'atto di stabilire le condizioni
di una scommessa influiscano su di noi piuttosto l'amore e il
timore del rischio o simili circostanze del tutto estranee che non
quel grado di fiducia che corrisponde alla nozione pi\`u o meno
intuitiva di probabilit\`a, e che noi ci proponiamo di misurare.
\\ Ci\`o sarebbe evidentemente vero se si trattasse di fare una scommessa singola e ben determinata;
non lo \`e pi\`u invece se ci mettiamo nelle condizioni supposte:
di un individuo che debba tenere un banco di scommesse su dati
eventi, accettando alle stesse condizioni qualunque scommessa
nell'uno o nell'altro senso. Vedremo che egli \`e costretto allora
a rispettare certe restrizioni, che sono i teoremi del calcolo
delle probabilit\`a. Altrimenti egli pecca di coerenza, e perde
sicuramente, purch\`e l'avversario sappia sfruttare il suo errore.
\\ Un individuo che non commette un tale errore, che valuta cio\`e delle probabilit\`a
in modo da non mettere in grado i competitori di vincere a colpo
sicuro, lo diremo coerente. E il calcolo delle probabilit\`a non
\`e allora se non la teoria matematica che insegna ad essere
coerenti.\textquoteright \textquoteright}}:

\medskip

\textquoteleft \textquoteleft It may seem, in fact, that in the act of establishing the conditions of a bet, what influences us more are love and fear of risk or similar circumstances completely unrelated to that degree of trust corresponding to the more or less intuitive notion of probability that we intend to measure.
\\ This would be evidently true if we were playing a single and well determined
bet; it is no longer the case if we place ourselves in the supposed conditions:
an individual who must operate as a bookmaker for given events, accepting any bet under the same conditions, whether in one direction or the other. We
shall see that he is then forced to comply with certain restrictions, which
are the theorems of Probability. Otherwise he will fail to be
coherent, and will surely face a loss, as far as his opponent will
be able to exploit his mistake.
\\ An individual who does not make such a mistake, that is, who assesses probabilities in a way that does not enable competitors to win with certainty, will be called coherent.
And probability calculus is nothing but the mathematical theory that teaches us to be coherent.
\textquoteright \textquoteright  

\medskip

We can therefore consider de Finetti as the precursor of the modern Arbitrage Theory, since he was the first to connect the possibility of a sure gain to a notion of fair pricing. The astounding capability of de Finetti in foreseeing such a central aspect of modern Finance was already pointed out in \cite{Nau01}: 

\medskip

\textquoteleft \textquoteleft In the 1970’s – the so-called ‘golden age’ of asset pricing theory
– there was an explosion of interest among finance theorists in models
of asset pricing by arbitrage. The key discovery of this period
was the fundamental theorem of asset pricing (Ross, 1976; see
also Dybvig and Ross, 1987; Duffie, 1996). The theorem states that
there are no arbitrage opportunities in a financial market if and only
if there exists a probability distribution with respect to which the
expected value of every asset’s future payoffs, discounted at the
risk free rate, lies between its current bid and ask prices; and if
the market is complete, the distribution is unique. This is just de
Finetti’s fundamental theorem of subjective probability, with discounting
thrown in, although de Finetti is not usually given credit in
the finance literature for having discovered the same result 40 years
earlier.  \textquoteright \textquoteright

\medskip

This note is structured as follows: in Section \ref{1931} we provide a review of the contents of \cite{definetti} adopting a modern mathematical formalism. This contributes in providing an elegant introduction to the spring of Arbitrage Theory and its relation to the fair evaluation of bets, which could be appreciated by the mathematical international community and can be also adopted as a teaching tool. In Section \ref{1980} 
we will connect such seminal contributions to important developments which are 
extensively treated in the book Theory of Probability \cite{deF} (whose first Italian edition dates back to 1970), where the study is moved far beyond simple bets. In Theorem \ref{pricing:measure} and Theorem \ref{fund:prev} we also elaborate how de Finetti theory was indeed few steps away from reaching the summit of a robust version of the first Fundamental Theorem of Asset pricing and the pricing-hedging duality in incomplete markets. In particular we show that coherency is sufficient for finding a finitely additive pricing measure, but does not exclude existence of strong arbitrage opportunities (i.e. a strategy which gives a positive value in every possible state of the world), case in which such a measure cannot be countably additive.

\section{De Finetti 1931 \cite{deF} and a primeval concept of arbitrage opportunity}\label{1931}

We start this section introducing the basic setup considered in \cite{deF}.  
\\Let $\CF$ be a sigma algebra \footnote{Most of the reasoning in this paper only necessitates $\CF$ to be an algebra. Nevertheless we prefer to work under this slightly stronger assumption in order to apply some duality results in the last part of the paper.} on a space of events $\Omega$ and $\PW:\CF\to \R$ a
set function. For any event $A$ we denote
by $\ind_A:\Omega\to \R$ the function valued $1$ if
$\omega$ belongs to $A$ and $0$ otherwise. 
\\The function $\PW$ needs to be regarded as the belief of a bookmaker
$\mathfrak{bm}$ who is providing quotations for bets on events $A\in \CF$
and is willing to accept any betting in both directions. More precisely the bookmaker is characterized by the following type of strategies: $\mathfrak{bm}$ can receive any monetary multiple of $\PW(A)$, say $\alpha \cdot\PW(A)$ dollars, for \textquoteleft selling\textquoteright (in which case $\alpha>0$) or \textquoteleft buying\textquoteright (in which case $\alpha<0$) a bet which pays
$\alpha$ dollars  if $A$ occurs and $0$ otherwise. In this way the bookmaker's final random payoff is given
by 
$$\Omega\ni\omega\mapsto \alpha\cdot(\PW(A)-\mathbf{1}_A(\omega)).$$ 
An event $A\in\CF$ is relevant (or non-negligible) for a bookmaker $\mathfrak{bm}$ if $\PW(A)>0$. On the contrary an event $A\in\CF$ is negligible if $\PW(A)=0$.  

\medskip

The following definition is the core of the characterization of Subjective Probability: it states that a bookmaker is assessing the value of a bet in a coherent way if it is not possible to combine bets in order to obtain a sure gain.
\begin{definition}\label{coherent:M} The bookmaker $\mathfrak{bm}$ adopting a set function $\PW:\CF\to\R$  is coherent if and only if  for any family of relevant events $\{A_i\}_{i=1,\ldots,n}$ and $(\alpha_1,\ldots,\alpha_n)\in \R^n$ there does not exist any $\varepsilon>0$ such that
\begin{equation}\label{point:arb} \sum_{i=1}^{n} \alpha_i\cdot(\PW(A_i)-\mathbf{1}_{A_i}(\omega))\geq \varepsilon > 0 \quad \forall\,\omega\in\Omega. 
\end{equation}
\end{definition}

\medskip

From now on we denote by $\mathcal{P}_f$ the convex set of finitely additive probability measures i.e. set functions $\PW:\CF\to \R$ such that the following conditions hold
\begin{itemize}
\item $\PW(A)\geq 0$ for every $A\in \CF$;
\item $\PW(\Omega)=1$;
\item $\PW(A\cup B)=\PW(A)+\PW(B)$ for any $A,B\in\CF$ such that $A\cap
B=\varnothing$.
\end{itemize}
\medskip

We are now ready to state and prove that the absence of opportunities like those in Eq. \eqref{point:arb} is equivalent to the axioms of probability for $\PW$. In the literature $\PW$ is referred as Subjective Probability since it reflects the personal expertise of the bookmaker in assessing the value of the bet $\ind_A$. 

\medskip

\begin{theorem}[de Finetti 1931]\label{DF31} Let $\PW:\CF\to \R$ be a set function on an algebra of events $\CF$ chosen by a bookmaker $\mathfrak{bm}$ to price bets of the form $\{\ind_A\mid A\in \CF \}$. The bookmaker is coherent if and only if $\PW\in \mathcal{P}_f$.
\end{theorem}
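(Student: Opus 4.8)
The plan is to prove the two implications separately, the reverse being a short computation and the forward one a finite ``Dutch book'' construction. For $\PW\in\mathcal{P}_f\Rightarrow$ coherent I would argue by contradiction: suppose some events $\{A_i\}_{i=1,\ldots,n}$ and coefficients $(\alpha_1,\ldots,\alpha_n)$ produced a payoff $f(\omega)=\sum_{i=1}^n\alpha_i(\PW(A_i)-\ind_{A_i}(\omega))\geq\varepsilon>0$ for every $\omega$. The finitely many sets $A_1,\dots,A_n$ partition $\Omega$ into cells $\bigcap_i A_i^{\pm}$ on which $f$ is constant, and, since $\PW$ is a finitely additive probability, $\PW$ assigns these cells non-negative masses summing to $\PW(\Omega)=1$; hence integration of the simple function $f$ against $\PW$ is elementary, linear, and satisfies $\E_\PW[\ind_{A_i}]=\PW(A_i)$. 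Linearity then gives $\E_\PW[f]=\sum_{i=1}^n\alpha_i(\PW(A_i)-\PW(A_i))=0$, whereas pointwise positivity forces $\E_\PW[f]\geq\varepsilon>0$, a contradiction.

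For the converse I would isolate one principle and read off all three axioms from it. The observation is that whenever a combination of indicators is \emph{constant}, say $\sum_{i=1}^n\alpha_i\ind_{A_i}\equiv k$, the associated payoff $f\equiv\sum_{i=1}^n\alpha_i\PW(A_i)-k$ is itself constant; applying coherence to $(\alpha_i)$ and to $(-\alpha_i)$ forbids this constant from being strictly positive or strictly negative, so $\sum_{i=1}^n\alpha_i\PW(A_i)=k$. Taking the single event $\Omega$, where $\ind_\Omega\equiv 1$, yields $\PW(\Omega)=1$. Non-negativity needs instead a one-sided argument: if $\PW(A)<0$, the position $\alpha=-1$ on $A$ gives payoff $\ind_A(\omega)-\PW(A)\geq-\PW(A)>0$ for all $\omega$, a sure gain, so $\PW(A)\geq0$ for every $A$.

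Finite additivity is the heart of the argument and the computation I would carry out most carefully. For disjoint $A,B$ the identity $\ind_A+\ind_B-\ind_{A\cup B}\equiv0$ is exactly of the constant form above with $k=0$, so the principle immediately delivers $\PW(A)+\PW(B)-\PW(A\cup B)=0$; concretely, the position $\alpha=\beta=1$ on $A,B$ and $\gamma=-1$ on $A\cup B$ produces the single value $\PW(A)+\PW(B)-\PW(A\cup B)$ on each of the three cells $A$, $B$, $(A\cup B)^c$, so any failure of additivity is a sure gain in one sign or the other. The main obstacle I anticipate is not a single estimate but the bookkeeping around the relevance qualifier in Definition \ref{coherent:M}: the non-negativity step in particular must be allowed to stake an event that is not a priori relevant, so I would either read coherence as permitting a wager on every event, or first establish $\PW\geq0$ (so that ``relevant'' coincides with ``positive probability'') before invoking coherence for normalization and additivity.
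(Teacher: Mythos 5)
Your proposal is correct and takes essentially the same route as the paper: the reverse implication is the identical expectation computation for the simple function $f$ (zero by linearity, yet $\geq\varepsilon$ pointwise), and the forward implication uses the very same Dutch-book positions ($\alpha=-1$ staked on $A$ for non-negativity, the bet on $\Omega$ for normalization, and the three bets on $A$, $B$, $A\cup B$ for additivity), your constant-combination principle being a compact repackaging of the two one-sided cases that the paper treats separately. One remark on the relevance qualifier you flag at the end: the paper's own proof silently adopts your first fix, i.e.\ it lets coherence quantify over arbitrary events of $\CF$ (it stakes an event $A$ with $\PW(A)<0$, which is not relevant in the sense of Definition~\ref{coherent:M}), and indeed only that fix can work --- your fallback of first establishing $\PW\geq 0$ is circular, since under the strict reading coherence places no constraint whatsoever on non-relevant events (e.g.\ on $\Omega=\{1,2\}$ the set function $\PW(\{1\})=-\tfrac12$, $\PW(\{2\})=\tfrac12$, $\PW(\Omega)=1$, $\PW(\varnothing)=0$ admits no book among its relevant events $\{2\}$ and $\Omega$, yet fails non-negativity).
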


\medskip

To understand how Theorem \ref{DF31} can be seen as a prototype of the first FTAP we make a preliminary comparison with the statement in \cite{HK79}. In Section \ref{1980} we shall deepen the discussion and show de Finetti's results should be rather compared with recent developments in Robust Finance (e.g. \cite{Bu+19}).  In the case of a finite set of scenarios $\Omega=\{\omega _{1},\ldots ,\omega_{n}\}$, let $s=(s^{1},\ldots ,s^{d})$ be the initial prices of $d$
risky assets with random outcome $S(\omega )=(S^{1}(\omega ),\ldots ,S^{d}(\omega
))$ for any $\omega \in \Omega$ and for simplicity assume that the numeraire asset is $s^0=1$ and $S^0=1$. Then, we have the following equivalence
\begin{itemize}
    \item No-Arbitrage condition: $\nexists H\in \mathbb{R}^{d}$ such that $H\cdot s\leq 0$  and $H\cdot S(\omega )\geq 0$ for any $\omega \in \Omega$, with $>$ for at least one $\omega \in \Omega$;
    \item $\exists Q\in \mathcal{P}_f$ such that $Q(\omega _{j})>0$ and $E_{Q}[S^{i}]=s^{i}$ , for all $\ 1\leq j\leq n,1\leq i\leq d$.
\end{itemize}
We stress that in both situations of Theorem \ref{DF31} and the FTAP in \cite{HK79} no reference probability measure is needed \emph{a priori}, but the existence of a pricing measure is rather deduced from the economic principle of coherency (resp. no arbitrage).  
Following Theorem \ref{DF31} a coherent bookmaker is assessing the values of any combination of bets following a \textquoteleft martingale principle\textquoteright, namely for any simple function $f=\sum_{i=1}^{N} \alpha_{i}\ind_{A_i}$, where $\{A_i\}_{i=1}^{N}$ is a partition of $\Omega$, we obtain the \textquoteleft fair price\textquoteright of $f$ by
$$ \text{price}(f) = \sum_{i=1}^{N}\alpha_i \cdot \text{price}(\mathbf{1}_{A_i}) =  \sum_{i=1}^{N}\alpha_i \cdot \PW(A_i)= E_{\PW}\left[f\right].$$
Nevertheless the notion of coherency is significantly weaker than the No-Arbitrage condition and leads to weaker properties on the pricing functional, as we shall prove in the next section (see Theorem \ref{pricing:measure}).   

\begin{remark}
    Even though $E_{\PW}$ is not a standard Lebesgue integral, as $\PW$ is only a finitely additive measure, this is not a technical issue as far as we restrict $E_{\PW}$  to simple functions (see \cite[Section 11.1]{AlBor}). 
\end{remark}

\begin{remark}\label{mon} Notice that condition $\PW(A\cup B)=\PW(A)+\PW(B)$ for any $A,B\in\CF$ with $A\cup B=\varnothing$ implies   $\PW(A)\leq \PW(B)$ for any $A\subset B$.
\\ Such monotonicity principle reflects automatically on the monotonicity of the functional $E_{\PW}$ on simple functions. 
\end{remark}

\medskip

\subsection{Proof of Theorem \ref{DF31}} It is important to highlight that the arguments that we are going to present recollect and synthesize some reasoning which can be found in \cite{definetti}. Nevertheless we could not find in the literature a reader-friendly complete proof of such a result and for completeness and clarity we decided to illustrate it in this preliminary part of the paper.        

\medskip

$(\mathbf{\Rightarrow})$ Let $\mathfrak{bm}$ be coherent. We first show that for any $A\in\CF$ we have $\PW(A)\geq 0$\footnote{Observe that positivity is linked to the initial convention for which if $\alpha>0$ then $\alpha\cdot\PW(A)$ is the cash received from $\mathfrak{bm}$ for selling bet $A$.}. Indeed if by contradiction $\PW(A)<0$ for some $A\in\CF$, then a player could pay the amount $\PW(A)$ to acquire the bet $\ind_A$. The player's  payoff would be $\ind_A-\PW(A) \geq -\PW(A)>0$, which contradicts the coherence of $\mathfrak{bm}$.  Similarly if $\PW(\Omega)<1$ then $\mathfrak{bm}$ is forced to accept a bet for which he receives the amount $\PW(\Omega)<1$ and pays $1$ whatever event $\omega\in\Omega$ occurs. Therefore the strategy $\alpha=-1$ violates coherence as $-\PW(\Omega)+1=\varepsilon>0$.
\\ Assume now that there existed a disjoint couple $A,B\in \CF$ such that $\PW(A\cup B)> \PW(A)+\PW(B)$. Consider the following bets:
\begin{enumerate}
\item pay $\PW(A)$ and receive $1$ if $A$ occurs, i.e. the payoff is $\ind_A-\PW(A)$; 
\item pay $\PW(B)$ and receive $1$ if $B$ occurs, i.e. the payoff is $\ind_B-\PW(B)$; 
\item receive $\PW(A\cup B)$ and pay $1$ if $A\cup B$ occurs, i.e. the payoff is $\PW(A\cup B)-\ind_{A\cup B}$.
\end{enumerate}
The aggregated strategy turns out to be  
$$\PW(A\cup B)-\ind_{A\cup B}-\PW(A)+\ind_A-\PW(B)+\ind_B=\PW(A\cup B)-\PW(A)-\PW(B)>0,$$ 
and in such a case the bookmaker $\mathfrak{bm}$ would not be coherent.
\\ In a similar way we can consider the case for a disjoint couple $A,B\in \CF$, such that $\PW(A\cup B)< \PW(A)+\PW(B)$.

\medskip

$(\mathbf{\Leftarrow})$ For the reverse implication let $\PW$ be a finitely additive probability measure. Assume by contradiction there exist a family $\{A_i\}_{i=1}^n\subseteq \CF$ and real numbers $\{\alpha_i\}_{i=1}^{n}$ such that \[f(\omega)=\sum_{i=1}^{n} \alpha_i\cdot(\PW(A_i)-\mathbf{1}_{A_i}(\omega))\geq \varepsilon > 0 \quad \forall\,\omega\in\Omega.\] 
Then if we compute $E_{\PW}[f]$ we get $\varepsilon\leq E_{\PW}[f]$, which is clearly a contradiction as $E_{\PW}[f]=\sum_{i=1}^{n} \alpha_i\cdot(\PW(A_i)-E_{\PW}[\mathbf{1}_{A_i}])=0$.

\section{From simple bets to general gambles}\label{1980} Throughout his intensive research activity de Finetti further developed his original intuition, extending the analysis from simple bets to general bounded random variables. We start this section by pointing out some key aspects which can be found in \cite{deF} and are also commented in \cite{SSK08}. 
\\ Our primary object so far was a collection of bets of the form $\{\ind_A\mid A\in\CF\}$ and their linear combinations. We now move to a general set of gambles $\mathcal{H}\subset \mathcal{L}^{\infty}:=\{f:\Omega\to \R\mid \CF\text{-measurable and bounded}\}$. We can imagine $\mathcal{H}$ as a liquid financial market where it is possible to invest (buying or selling) without specific constraints.  We only assume that 
$\mathbf{1}_{\Omega}\in\mathcal{H}$ (which is to be interpreted as the zero coupon bond with $0$ interest rate). We shall denote by $\text{span}(\Hcal)$ the linear space generated by $\Hcal$.
\\ The topological dual of $\mathcal{L}^{\infty}$ is the space of charges with bounded variation (\cite[Theorem 14.4]{AlBor}) which shall be denoted by $\mathbf{ba}$. Positive charges are denoted by $\mathbf{ba}_+$ and  the convex set $\mathcal{P}_f$ is included in $\mathbf{ba}_+ $.  For any $\mu\in \mathbf{ba}_+$ the action on $f\in \mathcal{L}^{\infty}$ is described by $\int_{\Omega}fd\mu$ where the integral is constructed in \cite[Section 11.2]{AlBor} \footnote{ Here for simplicity we only recall that $$\int_{\Omega}fd\mu =\sup\left\{\int_{\Omega}g d\mu \mid g\text{ simple and } g\leq f\right\}$$ }.

\medskip

We assume the existence of a functional $\pi:\mathcal{H}\to \R$ which in de Finetti words \cite{deF} has the following meaning: 

\medskip 

 \textquoteleft\textquoteleft The function $\pi$ represents the opinion
 of an individual who is faced with a situation of uncertainty. To each random magnitude $f$, there corresponds the
 individual's evaluation $\pi(f)$, the prevision of $f$, whose meaning, operationally, reduces, in terms of gain, to that of the (fair) price
 $f$.  \textquoteright \textquoteright

\medskip

We can therefore easily adapt Definition \ref{coherent:M}  to this more general situation.

\medskip

\begin{definition} $\pi:\mathcal{H}\to \R$ is \textbf{coherent} if for every
$n$, $f_1,\dots, f_n \in \mathcal{H}$ and
$\beta_1,\dots,\beta_n\in\R$ we have
$$\inf_{\omega\in \Omega}\sum_{i=1}^n\beta_i[\pi(f_i)-f_i(\omega)]\leq 0$$
The prevision $\pi$ is incoherent if for some $n$, $\varepsilon>0$,
$f_1,\dots, f_n \in \mathcal{H}$ and $\beta_1,\dots,\beta_n$ we
have
\begin{equation}\label{book}\sum_{i=1}^n \beta_i (\pi(f_i)-f_i(\omega))> \varepsilon
\quad \forall \omega\in\Omega.
\end{equation}
A combination of gambles which produces an output as in \eqref{book} is called
\textbf{book}.
\end{definition}

\medskip

\begin{remark}\label{corollary:order} It is clear that the prevision is not coherent if, for some $g\in\mathcal{L}^{\infty}$,  $\pi(g)$ is strictly greater
than $\sup_{\omega\in\Omega} g(\omega)$ (or strictly smaller than
$\inf_{\omega\in\Omega} g(\omega)$).
\end{remark}

\medskip

To understand in depth the relation between coherency and No-Arbitrage we consider the case $\mathcal{H}=\{S^{1},\ldots ,S^{d}\}$ where every $S^i\in \mathcal{L}^{\infty}$ is the random payoff of a risky asset at maturity and $\pi(S^i):=s^i$ is the observed initial price. Adopting the nomenclature proposed in \cite{Bu+19}, we recall three different classes of arbitrage opportunities:
\begin{description}
\item[Uniformly strong arbitrage] is a strategy $\beta_1,\dots,\beta_d\in\R^d$ such that
\begin{equation}\label{Ustrong}\sum_{i=1}^d \beta_i (\pi(f_i)-f_i(\omega))> \varepsilon
\quad \forall \omega\in\Omega.
\end{equation}
\item[Strong arbitrage] is a strategy $\beta_1,\dots,\beta_d\in\R^d$ such that
\begin{equation}\label{strong}\sum_{i=1}^d \beta_i (\pi(f_i)-f_i(\omega))> 0
\quad \forall \omega\in\Omega.
\end{equation}
\item[$\mathbb{P}$ arbitrage] is a strategy $\beta_1,\dots,\beta_d\in\R^d$ such that
\begin{eqnarray}\label{classical} && \sum_{i=1}^d \beta_i (\pi(f_i)-f_i(\omega))\geq 0 
\quad \text{for $\mathbb{P}$-almost every } \omega\in\Omega,
\\\nonumber && \mathbb{P} \left(\sum_{i=1}^d \beta_i (\pi(f_i)-f_i(\omega))> 0\right)>0,
\end{eqnarray}
where $\mathbb{P}$ is a reference probability measure on $(\Omega,\CF)$.
\end{description}
We can easily deduce the following implications
\[\exists \text{ a book } \Leftrightarrow \exists \text{ uniformly strong arb. } \Rightarrow \exists \text{ strong arb. } \Rightarrow \exists\, \mathbb{P}\text{-arb,}  \]
no matter which reference probability $\mathbb{P}$ has been chosen in the last implication. 
It is surprising how the notion of coherency matches the notion of \textquoteleft absence of uniformly strong arbitrage opportunity\textquoteright  which recently received great attention in Robust Finance (see \cite[Theorem 3]{Bu+19}). 

\medskip

The following proposition (adopting the formulation in \cite{SSK08}) shows how coherency is naturally related to the existence of a linear pricing functional. 

\medskip

\begin{proposition}\label{linear:pricing} The prevision $\pi:\mathcal{H}\to \R$
is coherent if and only if there exists a positive linear
functional $L:\text{span}(\Hcal)\to \R$ such
that $L(f)=\pi(f)$ for all $f\in\mathcal{H}$ and $L(\mathbf{1}_{\Omega})=1$.
\end{proposition}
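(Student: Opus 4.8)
The plan is to prove the two implications separately, with the forward direction carrying all the substance. The reverse direction is immediate: given such an $L$, fix $f_1,\dots,f_n\in\Hcal$ and $\beta_1,\dots,\beta_n\in\R$ and put $g:=\sum_{i=1}^n\beta_i(\pi(f_i)-f_i)\in\text{span}(\Hcal)$, which belongs to the span because $\mathbf 1_\Omega\in\Hcal$. Linearity together with $L(f_i)=\pi(f_i)$ and $L(\mathbf 1_\Omega)=1$ forces $L(g)=0$. If one had $\inf_\omega g(\omega)=\delta>0$, then $g\geq\delta\,\mathbf 1_\Omega$, so positivity of $L$ would give $L(g)\geq\delta>0$, a contradiction; hence $\inf_\omega g\leq 0$ for every strategy and $\pi$ is coherent.

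For the forward direction I would build $L$ by the only admissible recipe, namely $L\!\left(\sum_i\beta_i f_i\right):=\sum_i\beta_i\pi(f_i)$ on finite combinations of elements of $\Hcal$, and then verify the three required properties. The first and genuinely delicate point is \emph{well-definedness}: since $\Hcal$ need not be linearly independent, I must check that a vanishing combination $\sum_i\beta_i f_i=0$ (as a function on $\Omega$) forces $\sum_i\beta_i\pi(f_i)=0$. This is exactly where coherency enters, and it must be used in both directions. When $\sum_i\beta_i f_i=0$ the expression $\sum_i\beta_i(\pi(f_i)-f_i(\omega))$ is the constant $\sum_i\beta_i\pi(f_i)$; applying the coherency inequality to the strategy $(\beta_i)$ gives $\sum_i\beta_i\pi(f_i)\leq 0$, while applying it to $(-\beta_i)$ gives $\sum_i\beta_i\pi(f_i)\geq 0$. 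Together these yield the required equality, so $L$ is a well-defined linear functional on $\text{span}(\Hcal)$.

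The remaining two properties follow from the same device. Testing coherency on the single gamble $\mathbf 1_\Omega$ with coefficients $\pm 1$ yields $\pi(\mathbf 1_\Omega)=1$, hence $L(\mathbf 1_\Omega)=1$. For positivity, take any $g=\sum_i\beta_i f_i\in\text{span}(\Hcal)$ and apply coherency to the strategy $(-\beta_i)$: since $\sum_i(-\beta_i)(\pi(f_i)-f_i(\omega))=g(\omega)-L(g)$, this gives $\inf_\omega\big(g(\omega)-L(g)\big)\leq 0$, i.e.\ $L(g)\geq\inf_\omega g(\omega)$ (and symmetrically, using $(\beta_i)$, one gets $L(g)\leq\sup_\omega g(\omega)$). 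In particular $g\geq 0$ implies $L(g)\geq 0$, which is precisely positivity.

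I expect the main obstacle to be conceptual rather than computational: recognizing that no Hahn--Banach extension is needed here, because $\pi$ is already prescribed on all of $\Hcal$, so the construction of $L$ is pure linear algebra and coherency is invoked only to guarantee consistency (well-definedness) and sign control (positivity, together with the sandwich $\inf_\omega g\leq L(g)\leq\sup_\omega g$). The one point requiring genuine care is ensuring that \emph{every} linear relation among elements of $\Hcal$, not merely a single defining combination, is respected by $\pi$, which is exactly what the two-sided use of coherency in the well-definedness step secures.
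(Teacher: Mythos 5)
Your proof is correct and follows essentially the same route as the paper: the same forced definition of $L$ on $\text{span}(\Hcal)$, well-definedness obtained by applying coherency with both signs of coefficients to a vanishing combination, positivity and the normalization $L(\mathbf{1}_{\Omega})=1$ extracted from coherency, and the immediate reverse implication from linearity plus positivity. The only difference is organizational: the paper factors the inequality-preservation step into a separate statement (Lemma \ref{coherence:inequality}), which you inline via the sandwich $\inf_{\omega} g \leq L(g) \leq \sup_{\omega} g$.
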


\medskip

Even though it is never mentioned in de Finetti's work, the previous result hides some further structural properties of the functional $L$. Therefore we can reformulate Proposition \ref{linear:pricing} in the following more appealing fashion, which relates coherency to the existence of a pricing functional defined through a finitely additive probability measure. 

\medskip

\begin{theorem}\label{pricing:measure} The following conditions are equivalent: 

\begin{enumerate}
    \item The prevision $\pi:\mathcal{H}\to \R$
     is coherent
     \item there exists $\PW\in \mathcal{P}_f$ such that $E_{\PW}[h]=\pi(h)$ for all $h\in\mathcal{H}$
\end{enumerate}

In addition if $\pi$ is coherent, but we can find for some $f_1,\dots, f_n \in \mathcal{H}$ a real vector $\beta_1,\dots,\beta_n$ such that 

\begin{equation}\label{weak:book}\sum_{i=1}^n \beta_i (\pi(f_i)-f_i(\omega))> 0
\quad \forall \omega\in\Omega,
\end{equation}
then $\PW$ cannot be countably additive. 
\end{theorem}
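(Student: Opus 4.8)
The plan is to establish the equivalence first and then read off the countable-additivity claim from the explicit construction. The implication $(2)\Rightarrow(1)$ is the straightforward half: given $\PW\in\mathcal{P}_f$ representing $\pi$, for any $f_1,\dots,f_n\in\Hcal$ and $\beta_1,\dots,\beta_n\in\R$ I would apply $E_{\PW}$ to the combination $\sum_i\beta_i(\pi(f_i)-f_i)$ and use linearity together with $E_{\PW}[f_i]=\pi(f_i)$ to obtain $E_{\PW}[\sum_i\beta_i(\pi(f_i)-f_i)]=0$; since the $\PW$-average of a bounded function lies between its infimum and supremum, the infimum over $\omega$ cannot be strictly positive, which is exactly coherence (this mirrors the $(\Leftarrow)$ argument of Theorem \ref{DF31}).

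For $(1)\Rightarrow(2)$ I would start from Proposition \ref{linear:pricing}, which already furnishes a positive linear $L:\mathrm{span}(\Hcal)\to\R$ with $L=\pi$ on $\Hcal$ and $L(\mathbf{1}_\Omega)=1$; the remaining task is to manufacture a finitely additive probability measure out of $L$. First I would extend $L$ to all of $\mathcal{L}^\infty$ by Hahn--Banach against the sublinear functional $p(f)=\sup_{\omega}f(\omega)$: positivity of $L$ and $L(\mathbf{1}_\Omega)=1$ give $f\leq p(f)\mathbf{1}_\Omega$, hence $L(f)\leq p(f)$ on $\mathrm{span}(\Hcal)$, so a dominated extension $\tilde L\leq p$ exists. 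Such $\tilde L$ is automatically positive (if $f\geq 0$ then $\tilde L(-f)\leq p(-f)=\sup_\omega(-f(\omega))\leq 0$) and still satisfies $\tilde L(\mathbf{1}_\Omega)=1$. Defining $\PW(A):=\tilde L(\mathbf{1}_A)$ yields a member of $\mathcal{P}_f$ --- non-negativity, normalisation and finite additivity are immediate from linearity and positivity of $\tilde L$ --- and invoking the duality $(\mathcal{L}^\infty)^*=\mathbf{ba}$ identifies $\tilde L$ with integration against $\PW$, so that $E_{\PW}[h]=\tilde L(h)=L(h)=\pi(h)$ for every $h\in\Hcal$, including the non-simple gambles.

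For the additional claim, suppose $\pi$ is coherent and a strong arbitrage as in \eqref{weak:book} exists; set $g:=\sum_{i=1}^n\beta_i(\pi(f_i)-f_i)\in\mathcal{L}^\infty$, so that $g>0$ everywhere. Exactly as in the easy direction, $E_{\PW}[g]=0$ for any representing measure $\PW$. I would then argue by contradiction: if $\PW$ were countably additive, the sets $A_k:=\{g\geq 1/k\}$ increase to $\{g>0\}=\Omega$, so continuity from below forces $\PW(A_k)\to\PW(\Omega)=1$ and hence $\PW(A_{k_0})>0$ for some $k_0$; monotonicity of $E_{\PW}$ (Remark \ref{mon}) then gives $E_{\PW}[g]\geq \tfrac{1}{k_0}\PW(A_{k_0})>0$, contradicting $E_{\PW}[g]=0$. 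Hence no representing $\PW$ can be countably additive.

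I expect the only delicate point to be the passage from the extended functional $\tilde L$ on $\mathcal{L}^\infty$ to the measure $\PW$, together with the assertion that $E_{\PW}$ reproduces $\pi$ on all of $\Hcal$ and not merely on simple gambles; this rests on the $\mathbf{ba}$-representation of positive (hence norm-one, since $\|\tilde L\|=\tilde L(\mathbf{1}_\Omega)=1$) functionals on $\mathcal{L}^\infty$. By contrast, the failure of countable additivity --- although the conceptually novel part of the statement --- is a short consequence of continuity from below, once the coexistence of $g>0$ with $E_{\PW}[g]=0$ has been recorded.
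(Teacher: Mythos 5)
Your proof is correct, and its overall architecture --- start from Proposition \ref{linear:pricing}, extend the positive functional $L$ from $\text{span}(\mathcal{H})$ to all of $\mathcal{L}^{\infty}$, then identify the extension with integration against some $\PW\in\mathcal{P}_f$ via the duality $(\mathcal{L}^{\infty})^*=\mathbf{ba}$ --- is the same as the paper's; the genuine difference is the tool used for the extension step. The paper invokes a positive-extension theorem for ordered topological vector spaces (Schaefer, Corollary 2 of Section 5.4), whose hypotheses hold because the positive constants are interior points of the ordering cone of $\mathcal{L}^{\infty}$ and lie in $\text{span}(\mathcal{H})$; positivity and continuity of the extension come packaged in that citation. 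You instead run the classical dominated Hahn--Banach theorem against the sublinear functional $p(f)=\sup_{\omega}f(\omega)$, verifying $L\leq p$ on $\text{span}(\mathcal{H})$ from positivity of $L$ and $L(\mathbf{1}_{\Omega})=1$, and then recovering positivity of $\tilde L$ by hand (from $\tilde L(-f)\leq p(-f)\leq 0$ whenever $f\geq 0$) together with norm-continuity (from $\lvert\tilde L(f)\rvert\leq\snorm{f}$, so $\|\tilde L\|=\tilde L(\mathbf{1}_{\Omega})=1$). Your route is more elementary and self-contained: it avoids the ordered-TVS machinery entirely and makes explicit why the extension remains positive, at the cost of a slightly longer argument; the paper's route is shorter on the page and highlights the structural reason an extension exists (the interior-point condition on the cone), a perspective that transfers to more general ordered spaces. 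The other two pieces coincide in substance: your $(2)\Rightarrow(1)$ argument (the $\PW$-expectation of the candidate book is zero, hence its infimum cannot be strictly positive) is the paper's contradiction argument in direct form, and for the failure of countable additivity the paper merely cites the fact that a countably additive probability integrates an everywhere strictly positive bounded function to a strictly positive value --- which is precisely what your continuity-from-below argument with $A_k=\{g\geq 1/k\}\uparrow\Omega$ proves, so your version supplies the details the paper leaves implicit.
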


\medskip

The finitely additive probability $\PW$ resulting from Theorem \ref{pricing:measure} can be seen as a martingale measure as soon as we are back to the case $\mathcal{H}=\{S^{1},\ldots ,S^{d}\}$. Nevertheless coherency alone is not sufficient to guarantee $\PW$ to be countably additive, unless the situation in Eq. \ref{weak:book} does not occur (see for example the sufficient conditions provided in \cite[Theorem 3]{Bu+19}).   

\medskip

\begin{example}
 Consider the measurable space $\Omega=(0,1]$ endowed with the Borel sigma algebra $\Fcal$. Let $f(\omega)=\omega$ for any $\omega\in \Omega$ and assume $\pi(f)=0$ while $\pi(\ind_{\Omega})=1$. Indeed $f(\omega)-\pi(f)>0$ for any $\omega\in\Omega$, but $\pi$ is nevertheless coherent. In this situation it is not possible to find a (countably additive) probability measure $Q$ such that $E_{Q}[f]=0$. Nevertheless by Theorem \ref{pricing:measure} there exists a linear pricing functional defined through a finitely additive measure $\PW\in ba_+$. Such a measure necessarily gives mass $1$ only to sets $A\in\Fcal$ such that $\inf_{\omega\in A} f(\omega)=0$ and $0$ otherwise, so that $E_{\PW}[f]=0$.      
\end{example}

\medskip

We conclude this section providing an enhanced version of the so called
Fundamental Theorem of Prevision proved by de Finetti in \cite[Section 3.10]{deF} and we further elaborate a natural pricing-hedging duality. Theorem \ref{fund:prev}
is as far as we know the first result which interconnects superhedging and subhedging
prices (upper and lower previsions) to arbitrage opportunities
(coherence of the prevision). In this way de Finetti was suggesting a notion of No-Arbitrage interval for pricing in incomplete markets. 
\\ We argued so far that the prevision $\pi:\mathcal{H}\to \R$ should be interpreted as the way prices are assessed in a theoretical market. As such the value $\pi(f)$ is observable only in the set $\mathcal{H}$ and its linear extension $L:\text{span}(\Hcal)\to \R$ induces prices only to those positions which can be replicated by a strategy on $\mathcal{H}$. For this reason any element $g\in \mathcal{L}^{\infty}\setminus
\text{span}(\mathcal{H})$ necessitates for a criterion to be priced in a way that coherency is not violated.

\begin{theorem}[Fundamental Theorem of Prevision revisited] \label{fund:prev} Let $\pi:\mathcal{H}\to \R$ be a coherent prevision function, $L$ its linear extension to $\text{span}(\Hcal)$ and $g\in \mathcal{L}^{\infty}\setminus
\text{span}(\mathcal{H})$. Let $g_0\in \R$ be the prevision (or price) assigned to $g$ and define
\begin{eqnarray*}
\underline{\pi}(g) & = & \sup\{L(f)\mid f\in
\text{span}(\mathcal{H}),\; f\leq g\}
\\\overline{\pi}(g) & = & \inf\{L(f)\mid f\in
\text{span}(\mathcal{H}),\; f\geq g\}
\end{eqnarray*}
Then $g_0$ does not violate coherence in $\text{span}(\Hcal\cup \{g\})$ if and only if $g_0$ belongs to
the closed interval $[\underline{\pi}(g),\overline{\pi}(g)]$.
\\ Furthermore\footnote{The following result cannot be credited neither to de Finetti \cite{deF} not to \cite{SSK08}. Nevertheless we consider fruitful to complete the statement with the natural pricing hedging duality.}
\begin{eqnarray}
\underline{\pi}(g) & = & \inf\{E_{\PW}[g]\mid \PW\in \Mcal\} \nonumber
\\\overline{\pi}(g) & = & \sup\{E_{\PW}[g]\mid \PW\in \Mcal\} \label{super:dual}
\end{eqnarray}
where $\Mcal=\{\PW\in \mathcal{P}_f\mid E_{\PW}[h]=\pi(h) \;\forall\,h\in \mathcal{H}\}$.
\end{theorem}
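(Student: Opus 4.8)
The plan is to first settle the coherence characterization and then deduce the duality almost for free from it together with Theorem \ref{pricing:measure}. As a preliminary step I would record that $\underline{\pi}(g)\leq\overline{\pi}(g)$ and that both quantities are finite: since $g\in\mathcal{L}^\infty$ is bounded by some $M$, the multiples $\pm M\ind_\Omega$ lie in $\text{span}(\Hcal)$ and sub/super-hedge $g$, while for any $f_1\leq g\leq f_2$ in $\text{span}(\Hcal)$ positivity of $L$ forces $L(f_1)\leq L(f_2)$; taking suprema and infima gives the ordering and finiteness.

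For the characterization, the clean direction is to assume $g_0\in[\underline{\pi}(g),\overline{\pi}(g)]$ and build a positive linear extension. Since $g\notin\text{span}(\Hcal)$, every element of $\text{span}(\Hcal\cup\{g\})$ is uniquely $h+\lambda g$ with $h\in\text{span}(\Hcal)$ and $\lambda\in\R$, so I can set $\tilde{L}(h+\lambda g):=L(h)+\lambda g_0$; linearity and $\tilde{L}(\ind_\Omega)=1$ are immediate. The heart of the argument is positivity: if $h+\lambda g\geq 0$ I split into $\lambda=0$ (use positivity of $L$), $\lambda>0$ (then $-h/\lambda\leq g$ is a subhedge, so $L(-h/\lambda)\leq\underline{\pi}(g)\leq g_0$, which rearranges to $\tilde{L}\geq 0$), and $\lambda<0$ (then $-h/\lambda\geq g$ is a superhedge, so $L(-h/\lambda)\geq\overline{\pi}(g)\geq g_0$, and dividing by the negative $\lambda$ flips the inequality back to $\tilde{L}\geq 0$). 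Proposition \ref{linear:pricing} then yields coherence of the extended prevision. For the converse I would exhibit explicit books when $g_0$ lies outside the interval: if $g_0>\overline{\pi}(g)$ pick a superhedge $f\geq g$ with $L(f)<g_0$ and run the strategy with payoff $(g_0-g)-(L(f)-f)=(g_0-L(f))+(f-g)$, which is bounded below by the positive constant $g_0-L(f)$; the symmetric strategy using a subhedge handles $g_0<\underline{\pi}(g)$.

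The duality then follows. Weak duality is a monotonicity statement: for $\PW\in\Mcal$ and any superhedge $f\geq g$ we have $E_{\PW}[g]\leq E_{\PW}[f]=L(f)$, because $E_{\PW}$ agrees with $L$ on $\text{span}(\Hcal)$ (both are linear and coincide on $\Hcal$) and is monotone by Remark \ref{mon}; hence $E_{\PW}[g]\leq\overline{\pi}(g)$ and $\sup_{\PW\in\Mcal}E_{\PW}[g]\leq\overline{\pi}(g)$, with the subhedging bound symmetric. For the reverse inequalities I would invoke the characterization at the endpoints: taking $g_0=\overline{\pi}(g)$, which lies in the interval, the extended prevision on $\Hcal\cup\{g\}$ is coherent, so Theorem \ref{pricing:measure} produces $\PW\in\mathcal{P}_f$ with $E_{\PW}[h]=\pi(h)$ on $\Hcal$ (thus $\PW\in\Mcal$) and $E_{\PW}[g]=\overline{\pi}(g)$; this attains the supremum and closes the duality, and the choice $g_0=\underline{\pi}(g)$ handles the infimum identically.

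The main obstacle is the positivity verification in the one-dimensional extension: getting the inequalities from the definitions of $\underline{\pi}$ and $\overline{\pi}$ to line up correctly after dividing by $\lambda$ (with the sign flip when $\lambda<0$) is the delicate bookkeeping, and it is exactly there that the hypothesis $g_0\in[\underline{\pi}(g),\overline{\pi}(g)]$ is consumed. Everything else—finiteness, weak duality, and the extraction of the optimal $\PW$—is routine once the characterization and Theorem \ref{pricing:measure} are in hand.
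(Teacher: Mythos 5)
Your proof is correct, but it takes a genuinely different route from the paper's, most visibly in the duality \eqref{super:dual}. For the characterization, the paper argues entirely in terms of books: when $g_0$ lies outside the interval it constructs a book essentially as you do, but for $g_0\in[\underline{\pi}(g),\overline{\pi}(g)]$ it assumes a book $\sum_i\alpha_i(f_i-\pi(f_i))+\alpha(g-g_0)>\varepsilon$ exists and derives a contradiction with the definitions of $\underline{\pi}$ and $\overline{\pi}$ through a case analysis on the sign of $\alpha$; you instead build the explicit one-dimensional positive extension $\tilde L(h+\lambda g)=L(h)+\lambda g_0$ and invoke Proposition \ref{linear:pricing} --- the same sign bookkeeping ($\lambda$ versus $\alpha$), packaged as an extension rather than a contradiction. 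The real divergence is the duality: the paper introduces the cone $C$ of claims superhedgeable at zero cost, verifies that $\overline{\pi}$ is a monotone, cash-additive, positively homogeneous, Lipschitz convex functional, and applies the Fenchel--Moreau theorem in the form of \cite{FR02} to represent it over $\mathcal{P}_f$, finally identifying the finiteness domain of the conjugate $\overline{\pi}^{\ast}$ with $\Mcal$. You avoid convex duality altogether: weak duality follows from monotonicity and linearity of $E_{\PW}$ on $\text{span}(\Hcal)$, and for the reverse inequality you extend the prevision at the endpoint $g_0=\overline{\pi}(g)$ (coherent by the first part, since $\underline{\pi}(g)\leq\overline{\pi}(g)$) and apply Theorem \ref{pricing:measure} to produce $\PW\in\Mcal$ with $E_{\PW}[g]=\overline{\pi}(g)$. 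Your argument is more self-contained --- it reuses only results already established in the paper --- and it yields a strictly stronger conclusion, namely that the supremum and infimum in \eqref{super:dual} are attained (they are a maximum and a minimum); the paper's route does not give attainment, but it frames $\overline{\pi}$ inside the theory of convex risk measures, a perspective that extends to situations where the endpoint-extension trick is unavailable, for instance when the pricing rule is convex rather than linear.
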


\subsection{Proofs of Section \ref{1980}}
Before proving Proposition \ref{linear:pricing}\footnote{The proof is omitted in \cite{SSK08} as it is quite straightforward. Nevertheless we decided to include it because the spirit of this paper is to provide a complete and self contained review of the topic.}  we state and prove a useful lemma relating coherence to the property that every linear relation which holds for random
payoffs necessarily holds for their prevision as well.

\begin{lemma}\label{coherence:inequality}  The prevision $\pi:\mathcal{H}\to \R$
is coherent if and only for any $\{f_i\}_{i=1}^n\subseteq
\mathcal{H}$ and $\alpha_i\in\R$ such that $\sum_{i=1}^n \alpha_i
f_i\geq c$ we also have $\sum_{i=1}^n \alpha_i \pi(f_i)\geq c$.
\end{lemma}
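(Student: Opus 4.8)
The plan is to prove both implications directly from the definition of coherence, treating the asserted inequality-transfer property as nothing more than a rewriting of the coherence condition once the sign conventions are fixed; no analytic or topological input is needed, so the only real care is with the direction of the inequalities.

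For the forward implication $(\Rightarrow)$, I would assume $\pi$ is coherent and suppose that $\sum_{i=1}^n \alpha_i f_i \ge c$ holds pointwise on $\Omega$. The idea is to feed the strategy $\beta_i=-\alpha_i$ into the coherence inequality $\inf_{\omega\in\Omega}\sum_{i=1}^n \beta_i[\pi(f_i)-f_i(\omega)]\le 0$. Expanding and pulling the prevision terms, which are constants in $\omega$, out of the infimum rewrites this as $\inf_{\omega\in\Omega}\sum_{i=1}^n \alpha_i f_i(\omega) \le \sum_{i=1}^n \alpha_i \pi(f_i)$. Since the hypothesis $\sum_{i=1}^n \alpha_i f_i \ge c$ forces the left-hand infimum to be at least $c$, I conclude $\sum_{i=1}^n \alpha_i \pi(f_i)\ge c$, as required.

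For the reverse implication $(\Leftarrow)$, I would argue by contraposition. Assuming $\pi$ is incoherent, there is a book, i.e. reals $\beta_1,\dots,\beta_n$ and an $\varepsilon>0$ with $\sum_{i=1}^n \beta_i(\pi(f_i)-f_i(\omega))>\varepsilon$ for every $\omega\in\Omega$. Rearranging this into a pointwise lower bound on a linear combination of the $f_i$ gives $\sum_{i=1}^n (-\beta_i) f_i(\omega) \ge \varepsilon - \sum_{i=1}^n \beta_i\pi(f_i)=:c$ for all $\omega$. Applying the assumed inequality-transfer property with coefficients $\alpha_i=-\beta_i$ and this constant $c$ yields $\sum_{i=1}^n(-\beta_i)\pi(f_i)\ge c = \varepsilon + \sum_{i=1}^n(-\beta_i)\pi(f_i)$, that is $0\ge\varepsilon$, contradicting $\varepsilon>0$. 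Hence $\pi$ must be coherent.

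The main obstacle is not conceptual but one of bookkeeping: one must choose the correct sign for the test strategy ($\beta_i=-\alpha_i$, respectively $\alpha_i=-\beta_i$) so that the constant bound lands on the intended side of the inequality, and one must observe that the threshold $c$ enters only as a scalar on the right-hand side, so the argument never needs $\mathbf{1}_\Omega\in\mathcal{H}$ to absorb the constant. With these conventions fixed, both directions reduce to the elementary fact that an additive constant may be pulled out of the infimum, $\inf_{\omega}[g(\omega)-a]=\inf_{\omega}g(\omega)-a$, applied to the linear payoff $g=\sum_{i=1}^n\alpha_i f_i$ with $a=\sum_{i=1}^n \alpha_i\pi(f_i)$.
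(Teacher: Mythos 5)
Your proof is correct and takes essentially the same route as the paper's: both directions amount to the sign substitution $\beta_i=-\alpha_i$ and moving the constant $\sum_{i=1}^n\alpha_i\pi(f_i)$ across the pointwise bound, with the threshold $c$ playing the role of $\sum_{i=1}^n\alpha_i\pi(f_i)+\varepsilon$. The only cosmetic difference is that you prove the forward implication directly from the $\inf_{\omega}\le 0$ formulation of coherence, whereas the paper runs that direction by contradiction (exhibiting a book when the transfer property fails); the computations are identical.
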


\begin{proof} We show both directions by contradiction. Assume
first that $\pi$ is not coherent. There exist $n\in\mathbb{N}$, $\{f_i\}_{i=1}^n\subseteq \mathcal{H}$,  $\{\alpha_i\}_{i=1}^n \subset \R$ and
$\varepsilon >0$ such that $\sum_{i=1}^n \alpha_i (f_i-\pi(f_i))>
\varepsilon$. As a consequence $\sum_{i=1}^n \alpha_i f_i \geq
\sum_{i=1}^n \alpha_i \pi(f_i)+\varepsilon$ and by assumption
$\sum_{i=1}^n \alpha_i \pi(f_i) \geq \sum_{i=1}^n \alpha_i
\pi(f_i)+\varepsilon$ which leads to $\varepsilon\leq 0$, a contradiction.
\\For the reverse implication assume that there exist $\{f_i\}_{i=1}^n\subseteq
\mathcal{H}$ and $\alpha_i\in\R$ such that $\sum_{i=1}^n \alpha_i
f_i\geq c$, but $\sum_{i=1}^n \alpha_i \pi(f_i)< c$. Let $0
<\varepsilon< c-\sum_{i=1}^n \alpha_i \pi(f_i)$. We conclude that
$\sum_{i=1}^n \alpha_i (f_i-\pi(f_i))\geq c-\sum_{i=1}^n \alpha_i
\pi(f_i)>\varepsilon$, so that we find a book, prejudicing coherence.
\end{proof}

\begin{proof}[Proof of Proposition \ref{linear:pricing}] Assume that $\pi$ is coherent. We define $L:\text{span}(\mathcal{H})\to \R$ as $L(f)=\sum_{i=1}^n \alpha_i
\pi(f_i)$ if $f= \sum_{i=1}^n \alpha_i f_i$ with $\{f_i\}_{i=1}^n\subset
\mathcal{H}$. First we show that $L$ is well defined. Consider an
alternative representation $f=\sum_{j=1}^m \beta_j g_j$ with
$\{g_j\}_{j=1}^m\subset \mathcal{H}$ and assume $\sum_{i=1}^n \alpha_i
\pi(f_i)> \sum_{j=1}^m \beta_j \pi(g_j)$ (respectively $<$). Then for
$0<\varepsilon<\sum_{i=1}^n \alpha_i \pi(f_i)- \sum_{j=1}^m \beta_j
\pi(g_j)$ we have

$$\sum_{j=1}^m \beta_j (g_j-\pi(g_j))-\sum_{i=1}^n \alpha_i (f_i-\pi(f_i))=f-f+\sum_{i=1}^n \alpha_i \pi(f_i)- \sum_{j=1}^m
\beta_j \pi(g_j)>\varepsilon>0,$$
which is of course a contradiction as $\pi$ is coherent. $L$ is therefore well defined and linear by definition. Moreover
Lemma \ref{coherence:inequality} implies $L(f)\geq 0$
whenever $f\geq 0$ (i.e. $L$ is positive). Finally notice that
$L(\mathbf{1}_{\Omega})\neq 1$ then we can easily build up a book (same as in the proof of Theorem \ref{DF31}).

\medskip

The reverse implication follows immediately from the positivity of
$L$ and Lemma \ref{coherence:inequality}. In fact the existence of a book would lead to $\{f_i\}_{i=1}^n\subseteq \mathcal{H}$,  $\{\alpha_i\}_{i=1}^n \subset \R$ and
$\varepsilon >0$ such that $\sum_{i=1}^n \alpha_i (f_i-\pi(f_i))>
\varepsilon$ but $L\left(\sum_{i=1}^n \alpha_i (f_i-\pi(f_i))\right)=0$ and $L(\varepsilon\cdot\ind_{\Omega})= \varepsilon$.
\end{proof}

\begin{proof}[Proof of Theorem \ref{pricing:measure}] 

    
    1. $\Rightarrow$ 2.: Let $L:\text{span}(\Hcal)\to \R$ be the extension provided in Proposition \ref{linear:pricing}. Indeed positivity of $L$ implies Lipschitz continuity and hence norm continuity. This can be easily seen by the standard argument: $f-g-\|f-g\|_{\infty}\leq 0$ implies $L(f-g)\leq \|f-g\|_{\infty}\cdot L(\mathbf{1}_{\Omega})$. 
    \\ $\text{span}(\mathcal{H})$ is a linear subspace of $\mathcal{L}^{\infty}=\{f:\Omega\to \R\mid \CF\text{-measurable and bounded}\}$, which is on its own an ordered topological vector space (w.r.t. the pointwise order and the norm $\|\cdot\|_{\infty}$). Further the element $g(\omega)\equiv\varepsilon>0$ is an interior point of the ordering cone $\{f\in \mathcal{L}^{\infty}\mid f\geq 0\}$ and belongs to $C\cap \text{span}(\mathcal{H})$. Therefore \cite[Corollary 2, Section 5.4]{Schaefer71} implies that $L$ admits a continuous and positive linear extension $\hat{L}$ to the entire space $\mathcal{L}^{\infty}$.  
    \\ As the topological dual of $\mathcal{L}^{\infty}$ is $\mathbf{ba}$, there exists $\mu:\Fcal\to [0,+\infty)$ which is finitely additive and $\hat{L}(f)=\int_{\Omega}fd\mu$ where the integral is constructed in \cite[Section 11.2]{AlBor}. From $1=L(\ind_{\Omega})=\hat{L}(\ind_{\Omega})=\mu(\Omega)=1$ there exists $\PW\in \mathcal{P}_f$ such that $\hat{L}(f)=E_{\PW}[f]$ for all $f\in\mathcal{L}^{\infty}$, so that $E_{\PW}[h]=\pi(h)$ for all $h\in\mathcal{H}$.
 \medskip

 2. implies 1. By contradiction assume that we can find $n\in\mathbb{N}$,
$f_1,\dots, f_n \in \mathcal{H}$ and $\beta_1,\dots,\beta_n$ such that
\[f(\omega)=\sum_{i=1}^n \beta_i (\pi(f_i)-f_i(\omega))> \varepsilon > 0
\quad \forall \omega\in\Omega.\]
Then simultaneously $E_{\PW}[f]\geq \varepsilon>0$ and $0=L(f)=E_{\PW}[f]$, hence a contradiction. 
\\ The final statement follows immediately from the property that for any countable probability measure $\PW$ and $g\in\mathcal{L}^{\infty}$, $g(\omega)>0$ for any $\omega\in\Omega$, we have $E_{\PW}[g]>0$.   
\end{proof}

\begin{proof}[Proof of Theorem \ref{fund:prev}]
First we show that if $g_0< \underline{\pi}(g)$ then there exists
a book. Let $\varepsilon:= \underline{\pi}(g)-g_0$ and $f\in
\text{span}(\mathcal{H})$ such that $f\leq g$ and
$\underline{\pi}(g)-\pi(f)<\frac{\varepsilon}{2}$. Since $g-f\geq 0$
we have $g-g_0-(f-\pi(f))>\frac{\varepsilon}{2}$, which forms a
book. Similar for $g_0> \overline{\pi}(g)$ so that we have shown
sufficiency.
\\Now we consider the reverse implication: assume that $g_0\in
[\underline{\pi}(g),\overline{\pi}(g)]$, but there exists a book

$$\sum_{i=1}^n \alpha_i (f_i-\pi(f_i))+\alpha(g-g_0)>\varepsilon >0.$$

Necessarily $\alpha\neq 0$, as by assumption we cannot create a book with the elements of $\mathcal{H}$ only. Consider first the case $\alpha>0$ and observe that
$$g\geq \frac{\varepsilon}{\alpha}+g_0+\sum_{i=1}^n \frac{\alpha_i}{\alpha}\pi(f_i) -\sum_{i=1}^n \frac{\alpha_i}{\alpha}f_i\in \text{span}(\mathcal{H}),$$
since we are assuming $\mathbf{1}_{\Omega}\in \mathcal{H}$.
Moreover the prevision $\pi$ applied to
$\frac{\varepsilon}{\alpha}+g_0+\sum_{i=1}^n
\frac{\alpha_i}{\alpha}\pi(f_i) -\sum_{i=1}^n
\frac{\alpha_i}{\alpha}f_i$ gives the value
$\frac{\varepsilon}{\alpha}+ g_0$. This leads to a contradiction
since
\begin{eqnarray*} g_0&\geq & \sup\{\pi(f)\mid f\in
\text{span}(\mathcal{H}),\; f\leq g\}
\\ &\geq & \pi\left(\frac{\varepsilon}{\alpha}+g_0+\sum_{i=1}^n
\frac{\alpha_i}{\alpha}\pi(f_i) -\sum_{i=1}^n
\frac{\alpha_i}{\alpha}f_i\right)= 
\frac{\varepsilon}{\alpha}+ g_0.
\end{eqnarray*}
Consider now the case $\alpha<0$ and observe that
$$g\leq \frac{\varepsilon}{\alpha}+g_0+\sum_{i=1}^n \frac{\alpha_i}{\alpha}\pi(f_i) -\sum_{i=1}^n \frac{\alpha_i}{\alpha}f_i\in \text{span}(\mathcal{H}).$$
This leads to a contradiction
from $\frac{\varepsilon}{\alpha}<0$ jointly with
\begin{eqnarray*} g_0&\leq & \inf\{\pi(f)\mid f\in
\text{span}(\mathcal{H}),\; f\geq g\}
\\ &\leq & \pi\left(\frac{\varepsilon}{\alpha}+g_0+\sum_{i=1}^n
\frac{\alpha_i}{\alpha}\pi(f_i) -\sum_{i=1}^n
\frac{\alpha_i}{\alpha}f_i\right) =
\frac{\varepsilon}{\alpha}+ g_0.
\end{eqnarray*}

\medskip

The proof of the duality \eqref{super:dual} follows from the standard Fenchel Moreu Theorem and we here give a brief sketch for the sake of completeness (referring to \cite{FR02} for the details). We only discuss the case for $\overline{\pi}$ as the other follows immediately from the relation $\underline{\pi}(g)=-\overline{\pi}(-g)$.
\\ We first introduce the set 
$$C=\{g \in \mathcal{L}^{\infty} \mid  g\leq f \text{ for some } f\in \text{span}(\mathcal{H}) \text{ with } L(f)=0 \}$$ 
and notice that $C$ is convex and $\overline{\pi}(g)=\inf\{a\in\R\mid g-a\in C\}$. This implies that $\overline{\pi}$ is a convex monotone functional and $\overline{\pi}(g+a)=\overline{\pi}(g)+a$ for any $a\in\R$. As $C$ is a cone we also have $\overline{\pi}(\alpha g)=\alpha \overline{\pi}(g)$ for any $\alpha>0$. From monotonicity $f\leq g+\|f-g\|_{\infty}$ implies $\overline{\pi}(f)\leq \overline{\pi}(g+\|f-g\|_{\infty})$ so that $\overline{\pi}(f)-\overline{\pi}(g)\leq \|f-g\|_{\infty}$. Inverting the role of $f,g$ we have that $\overline{\pi}$ is Lipschitz continuous and hence norm continuous.
Applying Fenchel Moreou duality theorem in the form of \cite[Corollary 7]{FR02} we find the representation 
\[\overline{\pi}(g)=\sup \{E_{\PW}[g]\mid \PW\in\mathcal{P}_f \text{ s.t. } \overline{\pi}^{\ast}(\PW)<+\infty \}\]
where $\overline{\pi}^{\ast}(\PW)=\sup_{f\in \mathcal{L}^{\infty}}\{E_{\PW}[f]-\overline{\pi}(f)\}$ takes only values $0$ or $+\infty$. We observe that for any $\PW\in\mathcal{P}_f$ such that $\overline{\pi}^{\ast}(\PW)<+\infty$ we have
$\overline{\pi}(g)\geq E_{\PW}[g]$ for any $g\in\mathcal{L}^{\infty}$. Moreover for any $h\in \text{span}(\mathcal{H})$ we know that $\overline{\pi}(h)=L(h)$ so that for $g\in C$ we find $h\in\mathcal{H}$ such that $g\leq h$ and  
\[E_{\PW}[g]\leq L(h)=\overline{\pi}(h)=0, \text{ for } \PW\in\mathcal{P}_f \text{ s.t. } \overline{\pi}^{\ast}(\PW)<+\infty. \]
To conclude we have that for any $\PW\in\mathcal{P}_f$ such that $\overline{\pi}^{\ast}(\PW)<+\infty$ the inequality  
$E_{\PW}[g]\leq 0$ holds for any $g\in C$ and hence necessarily $E_{\PW}[h]=0$ for any $h\in\text{span}(\mathcal{H})\subseteq C$.
\end{proof}

\bibliographystyle{abbrv} 
 \bibliography{RTAP.bib}

\end{document}